\numberwithin{equation}{section} 
\def\sign{\mbox{sign}}
\def\ZZ{\mathbb Z}
\newtheorem{thm}{Theorem}[section]
\newtheorem{lem}[thm]{Lemma}
\newtheorem{cor}[thm]{Corollary}
\begin{document}

\title{\bf Congruences involving \\ alternating multiple harmonic sum}

\author{{\sc Roberto Tauraso}\\
Dipartimento di Matematica\\
Universit\`a di Roma ``Tor Vergata'', Italy\\
{\tt tauraso@mat.uniroma2.it}\\
{\tt http://www.mat.uniroma2.it/$\sim$tauraso}
}

\date{}
\maketitle
\begin{abstract}
\noindent We show that for any prime prime $p\not=2$
$$\sum_{k=1}^{p-1} {(-1)^k\over k}{-{1\over 2} \choose k} \equiv -\sum_{k=1}^{(p-1)/2}{1\over k} \pmod{p^3}$$
by expressing the l.h.s. as a combination of alternating multiple harmonic sums.
\end{abstract}

\section{Introduction}
In \cite{VHa:96} Van Hamme presented several results and conjectures
concerning a curious analogy between the values of certain hypergeometric series
and the congruences of some of their partial sums modulo power of prime.
In this paper we would like to discuss a new example of this analogy. Let us consider
\begin{eqnarray*}
\sum_{k=1}^{\infty}{(-1)^k\over k}{-{1\over 2} \choose k}&=&
\left(1\over 2\right)+{1\over 2}\left(1\cdot 3\over 2\cdot 4\right)
+{1\over 3}\left(1\cdot 3\cdot 5\over 2\cdot 4 \cdot 6\right)
+{1\over 4}\left(1\cdot 3\cdot 5 \cdot 7\over 2\cdot 4 \cdot 6\cdot 8\right)
+\cdots\\
&=&\int_0^{-1}{1\over x}\left({1\over \sqrt{1+x}}-1\right)\,dx=
-2\left[\log\left({1+\sqrt{1+x}\over 2}\right)\right]_0^{-1}
=2\log 2.
\end{eqnarray*}
Let $p$ be a prime number, what's the $p$-adic analogue of the above result?

\noindent The real case suggests to replace the logarithm
with some $p$-adic function which behaves in a similar way.
It turns out that the right choice is the {\sl Fermat quotient} 
$$q_p(x)={x^{p-1}-1 \over p}$$
(which is fine since $q_p(x \cdot y)\equiv q_p(x)+q_p(y)$ (mod $p$)),
and, as shown in \cite{SuzwTa:09}, the following congruence holds for any prime $p\not=2$
$$\sum_{k=1}^{p-1}{(-1)^k\over k}{-{1\over 2} \choose k}\equiv 2\,q_p(2) \pmod{p}.$$

Here we improve this result to the following statement.

\begin{thm}\label{T11} For any prime $p>3$
\begin{eqnarray*}
\sum_{k=1}^{p-1} {(-1)^k\over k}{-{1\over 2} \choose k} 
&\equiv& 2q_p(2)-pq_p(2)^2+{2\over 3}p^2q_p(2)^3+{7\over 12}p^2 B_{p-3} \\
&\equiv& -\sum_{k=1}^{(p-1)/2}{1\over k} \pmod{p^3}
\end{eqnarray*}
where $B_n$ is the $n$-th Bernoulli number.
\end{thm}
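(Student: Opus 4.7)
Since $(-1)^k\binom{-1/2}{k} = \binom{2k}{k}/4^k$, the left-hand side may be rewritten as
$$S = \sum_{k=1}^{p-1}\frac{\binom{2k}{k}}{k\,4^k}.$$
The strategy is to expand $\binom{-1/2}{k}$ as a $p$-adic perturbation of the integer binomial $\binom{(p-1)/2}{k}$, use the classical identity $\sum_{k=1}^{n}(-1)^k\binom{n}{k}/k = -H_n$ to extract the main term $-H_{(p-1)/2}$, and control the higher-order corrections by known congruences for alternating multiple harmonic sums (of the sort developed in \cite{SuzwTa:09}).

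Starting from $-1/2 = (p-1)/2 - p/2$ one obtains
$$\binom{-1/2}{k} = \binom{(p-1)/2}{k}\prod_{j=0}^{k-1}\Bigl(1 - \frac{p}{2j+1}\Bigr)^{-1}\qquad(0 \le k \le (p-1)/2),$$
which expands modulo $p^3$ as
$$\binom{-1/2}{k} \equiv \binom{(p-1)/2}{k}\Bigl[\,1 + p\,\sigma_1(k) + \tfrac{p^2}{2}\bigl(\sigma_1(k)^2 + \sigma_2(k)\bigr)\,\Bigr] \pmod{p^3},$$
with $\sigma_s(k) = \sum_{j=1}^{k}(2j-1)^{-s}$. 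Summing against $(-1)^k/k$: the constant piece contributes $-H_{(p-1)/2}$ by the classical identity above applied with $n=(p-1)/2$; the $\sigma_1$- and $\sigma_2$-pieces, after interchanging the order of summation, turn into combinations of alternating multiple harmonic sums of weight $\le 3$ taken over the odd residues in $\{1,\dots,p-2\}$. For $k \ge (p+1)/2$ the displayed identity breaks down (the factor at $2j+1=p$ vanishes), but there $\binom{-1/2}{k}$ has $p$-adic valuation exactly one, so these tail contributions are $O(p)$ and are handled separately via $k \mapsto p-k$ together with Wolstenholme-type congruences for $\binom{2p-2\ell}{p-\ell}/p$ modulo $p^2$.

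The main obstacle is precisely the combining step: after unwinding, one is left with several alternating multiple harmonic sums (of depths $1$ and $2$, both over $\{1,\dots,p-1\}$ and over the odd-residue subsum), and the precise mod-$p$ and mod-$p^2$ congruences for each must be applied so that they all collapse modulo $p^3$ against the contribution from the tail. Once this first equivalence $S \equiv -H_{(p-1)/2} \pmod{p^3}$ is established, the second equivalence follows from the classical congruence
$$H_{(p-1)/2} \equiv -2q_p(2) + p\,q_p(2)^2 - \tfrac{2}{3}p^2 q_p(2)^3 - \tfrac{7}{12}p^2 B_{p-3} \pmod{p^3},$$
a refinement of Glaisher's formula that is invoked at the end.
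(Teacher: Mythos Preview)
Your route is genuinely different from the paper's. The paper never perturbs $\binom{-1/2}{k}$ around $\binom{(p-1)/2}{k}$ and never splits the range at $(p-1)/2$. Instead it first proves a purely combinatorial identity (its Corollary~3.2),
$$4^n\sum_{k=1}^{n}\frac{(-1)^k}{k}\binom{-1/2}{k}
=-4(-1)^n\sum_{d=0}^{n-1}\frac{(-1)^d}{n-d}\sum_{j=0}^{d-1}\binom{2n}{j}
-2(-1)^n\sum_{d=0}^{n-1}\frac{(-1)^d}{n-d}\binom{2n}{d},$$
sets $n=p$, and then uses the one-line expansion $\binom{2p}{j}\equiv -2p\,(-1)^j/j+4p^2(-1)^j H(1;j-1)/j\pmod{p^3}$ for $0<j<p$ (Lemma~3.3). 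This turns the right-hand side directly into the full-range alternating sums $H(-1;p-1)$, $H(-1,-1;p-1)$, $H(-1,-2;p-1)$ and $H(1,-1,-1;p-1)$, each of which is evaluated in Section~2. Only after the $q_p(2)$--$B_{p-3}$ expression is obtained does the paper invoke the $H\!\left(1;\tfrac{p-1}{2}\right)$ congruence to recognise it as $-H_{(p-1)/2}$; you run that implication in the opposite direction.

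What your approach buys is that the target $-H_{(p-1)/2}$ appears immediately as the leading term via the classical identity, with no auxiliary combinatorial lemma. What the paper's approach buys is that there is no split and no tail: only $\binom{2p}{j}$ shows up, and its $p$-adic expansion is trivial, so the bookkeeping lands squarely on the standard alternating sums over $\{1,\dots,p-1\}$ for which the paper has built a complete mod-$p$ table.

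The point I would flag in your outline is exactly the one you call the main obstacle. After interchanging, the $p\sigma_1$-correction is
$$p\sum_{j=1}^{(p-1)/2}\frac{1}{2j-1}\sum_{k=j}^{(p-1)/2}\frac{(-1)^k}{k}\binom{(p-1)/2}{k},$$
and these inner partial sums carry the weight $\binom{(p-1)/2}{k}$ (equivalently $\binom{2k}{k}/4^k$ mod $p$) and do \emph{not} collapse to harmonic sums in any evident way; the same issue recurs at order $p^2$ and in the tail, where you need $\sum_{\ell\le (p-1)/2}\binom{2(p-\ell)}{p-\ell}/\big((p-\ell)4^{p-\ell}\big)$ modulo $p^3$. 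These can likely be forced through with further identities of Sun--Tauraso type, but the paper's global identity is engineered precisely so that this never arises: once $\binom{2p}{j}$ is the only binomial in sight, everything is already a multiple harmonic sum.
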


In the proof we will employ some new congruences for
alternating multiple harmonic sums which are interesting in themselves such as
\begin{align*}
&H(-1,-2;p-1)=\sum_{0<i<j<p}{(-1)^{i+j}\over ij^2}\equiv -{3\over 4}B_{p-3}           &\pmod{p}\,,\\
&H(-1,-1,1;p-1)=\sum_{0<i<j<k<p}{(-1)^{i+j}\over ijk}\equiv q_p(2)^3+{7\over 8}B_{p-3}&\pmod{p}.
\end{align*}

\section{Alternating multiple harmonic sums}
Let $r>0$ and let $(a_1,a_2,\dots,a_r)\in (\ZZ^*)^r$.
For any $n\geq r$, we define the {\it alternating multiple harmonic sum} as
$$H(a_1,a_2,\dots,a_r;n)=
\sum_{1\leq k_1<k_2<\dots<k_r\leq n}\; \prod_{i=1}^r{\sign(a_i)^{k_i}\over k_i^{|a_i|}}.$$
The integers $r$ and $\sum_{i=1}^r |a_i|$ are respectively the {\it depth} and the {\it weight} of the harmonic sum.
 
From the definition one derives easily the {\it shuffle relations}:
\begin{align*}
&H(a;n)\cdot H(b;n)=H(a,b;n)+H(b,a;n)+H(a\oplus b;n)\\
&H(a,b;n)\cdot H(c;n)=H(c,a,b;n)+H(a,c,b;n)+H(a,b,c;n)\\
&\hspace{40mm}+H(a\oplus b,c;n)+H(a,b\oplus c;n)
\end{align*}
where $a\oplus b=\sign(ab)(|a|+|b|)$.

\noindent Moreover, if $p$ is a prime, by replacing $k_i$ with $p-k_i$ we get the
{\it reversal relations}:
\begin{align*}
&H(a,b;p-1)\equiv H(b,a;p-1)(-1)^{a+b}\sign(ab) &\pmod{p}\,,\\
&H(a,b,c;p-1)\equiv H(c,b,a;p-1)(-1)^{a+b+c}\sign(abc) &\pmod{p}.
\end{align*}

The values of several {\it non-alternating} (i. e. when all the indices are positive)
harmonic sums modulo a power of prime are well known:

\begin{enumerate} 

\item[(i).] (\cite{Ho:07}, \cite{ZC:07}) for $a,r>0$ and for any prime $p>ar+2$
$$
H(\left\{a\right\}^r;p-1)\equiv \left\{
\begin{array}{lll}
(-1)^{r}{a(ar+1)\over 2(ar+2)}\,p^2\,B_{p-ar-2}  &\pmod{p^3} &\mbox{if $ar$ is odd}\\ \\
(-1)^{r-1}{a\over ar+1}p\,B_{p-ar-1}  &\pmod{p^2} &\mbox{if $ar$ is even}
\end{array}
\right.;
$$

\item[(ii).] (\cite{Sunzh:00}) for any prime $p>3$
$$
H\left(1;{p-1\over 2}\right)\equiv
-2q_p(2)+p q_p(2)^2-{2\over 3}\,p^2 q_p(2)^3-{7\over 12}\,p^2\,B_{p-3}  \pmod{p^3}.
$$
and for $a>1$ and for any prime $p>a+1$
$$H\left(a;{p-1\over 2}\right)\equiv\left\{
\begin{array}{lll}
-{2^a-2\over a}\,B_{p-a} &\pmod{p} &\mbox{if $a$ is odd}\\ \\
{a(2^{a+1}-1)\over 2(a+1)}\,p\,B_{p-a-1} &\pmod{p^2} &\mbox{if $a$ is even}
\end{array}
\right.;$$

\item[(iii).] (\cite{Ho:07}, \cite{Zh:06}) for $a,b>0$ and for any prime $p>a+b+1$
$$H(a,b;p-1)\equiv {(-1)^b\over a+b}{a+b\choose a} \,B_{p-a-b} \pmod{p}$$
(note that $B_{2n+1}=0$ for $n>0$).

\end{enumerate}

The following result will allow us to compute the mod $p$ values 
of multiple harmonic sums of depth $\leq 2$ when the indices are all negative.

\begin{thm}\label{T21} Let $a,b>0$ then for any prime $p\not=2$
\begin{align*}
&H(-a;p-1)=-H(a;p-1)+{1\over 2^{a-1}} H\left(a;{p-1\over 2}\right),\\
&2H(-a,-a;p-1)=H(-a;p-1)^2-H(2a;p-1),
\end{align*}
and
$$H(-a,-b;p-1)\equiv
-\left(1-{1\over 2^{a+b-1}}\right)\,H(a,b;p-1)-{(-1)^b\over 2^{a+b-1}} H\left(a;{p-1\over 2}\right)H\left(b;{p-1\over 2}\right)
\pmod{p}.
$$
\end{thm}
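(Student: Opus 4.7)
The plan is to treat the three assertions separately. For identity (1), I would partition $H(-a;p-1)=\sum_{k=1}^{p-1}(-1)^k/k^a$ by the parity of $k$: the even indices $k=2\ell$ contribute $+\tfrac{1}{2^a}H(a;(p-1)/2)$, while the odd indices contribute $-\bigl(H(a;p-1)-\tfrac{1}{2^a}H(a;(p-1)/2)\bigr)$, and summing produces the formula. For identity (2), I would apply the shuffle law to $H(-a;p-1)^2$ and use $(-a)\oplus(-a)=\sign(a^2)(2a)=2a$; the asserted formula follows by rearrangement.

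For the congruence (3), the plan is to decompose $H(-a,-b;p-1)=T_1-T_2-T_3+T_4$, where $T_k$ is the sum of $1/(i^aj^b)$ over $i<j$ in the four parity quadrants (even-even, even-odd, odd-even, odd-odd, in that order). Since $H(a,b;p-1)=T_1+T_2+T_3+T_4$, adding yields the \emph{exact} identity
$$H(-a,-b;p-1)+H(a,b;p-1)=2(T_1+T_4),$$
so only the same-parity pieces need to be handled mod $p$. The substitution $i=2I$, $j=2J$ gives exactly $T_1=H(a,b;(p-1)/2)/2^{a+b}$; the substitution $i=p-2I$, $j=p-2J$ with $J<I$ in $\{1,\ldots,(p-1)/2\}$, after using $(p-2I)^a\equiv(-2I)^a\pmod p$ and relabeling the summation indices, yields $T_4\equiv(-1)^{a+b}H(b,a;(p-1)/2)/2^{a+b}\pmod p$. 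Applying the depth-$2$ shuffle relation in the half-range then replaces $H(b,a;(p-1)/2)$ by $H(a;(p-1)/2)H(b;(p-1)/2)-H(a,b;(p-1)/2)-H(a+b;(p-1)/2)$.

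The main obstacle is converting the resulting half-range expression into one involving $H(a,b;p-1)$, which is what appears on the right-hand side of the target formula. I plan to bridge this gap via the auxiliary congruence
$$H(a,b;p-1)\equiv(1-(-1)^{a+b})H(a,b;(p-1)/2)-(-1)^{a+b}H(a+b;(p-1)/2)\pmod{p},$$
which I would derive by an analogous splitting: cut the outer index $j$ at $(p-1)/2$, apply $j\mapsto p-j$ to the upper half, and use $H(a;p-1)\equiv 0\pmod p$ together with the tail identity $\sum_{k=1}^{j'}1/(p-k)^a\equiv(-1)^a H(a;j')\pmod p$ to rewrite the partial sum $H(a;p-j'-1)$. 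Substituting this auxiliary identity into the formula for $2(T_1+T_4)$ collapses the half-range multi-harmonic sums and leaves $\bigl(H(a,b;p-1)+(-1)^{a+b}H(a;(p-1)/2)H(b;(p-1)/2)\bigr)/2^{a+b-1}$. By item (ii) of the paper, $H(a;(p-1)/2)\equiv 0\pmod p$ whenever $a$ is even, so the product $H(a;(p-1)/2)H(b;(p-1)/2)$ vanishes mod $p$ unless both $a$ and $b$ are odd; but then $(-1)^{a+b}=1=-(-1)^b$, so the sign $(-1)^{a+b}$ may be replaced by $-(-1)^b$ in the product term, matching the stated formula. Rearranging then yields the congruence (3).
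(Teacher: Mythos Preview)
Your treatment of the first two identities is exactly the paper's: parity splitting for the depth-$1$ formula and the shuffle relation $H(-a;n)^2=2H(-a,-a;n)+H(2a;n)$ for the second.

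For the congruence, your starting point also coincides with the paper's: both arguments reduce to evaluating the same-parity contribution
\[
T_1+T_4\;\equiv\;\frac{1}{2^{a+b}}\Bigl(H\bigl(a,b;\tfrac{p-1}{2}\bigr)+(-1)^{a+b}H\bigl(b,a;\tfrac{p-1}{2}\bigr)\Bigr)\pmod{p}.
\]
The difference lies in how this is converted into an expression in $H(a,b;p-1)$. The paper splits \emph{both} indices $i,j$ at $(p-1)/2$ and uses only $p-k\equiv -k$ to obtain directly
\[
H(a,b;p-1)\equiv H\bigl(a,b;\tfrac{p-1}{2}\bigr)+(-1)^{b}H\bigl(a;\tfrac{p-1}{2}\bigr)H\bigl(b;\tfrac{p-1}{2}\bigr)+(-1)^{a+b}H\bigl(b,a;\tfrac{p-1}{2}\bigr),
\]
from which the bracket above equals $H(a,b;p-1)-(-1)^{b}H(a;\tfrac{p-1}{2})H(b;\tfrac{p-1}{2})$ with the sign $-(-1)^b$ appearing immediately and no further input required. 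Your route (half-range shuffle, then your auxiliary identity, then a sign repair via item~(ii)) is correct but costs you generality: the auxiliary identity uses $H(a;p-1)\equiv 0\pmod p$, which fails when $p-1\mid a$, and the final appeal to~(ii) to force $H(a;\tfrac{p-1}{2})\equiv 0$ for even $a$ requires $p>a+1$. Thus your argument proves the congruence only for $p$ large relative to $a,b$, whereas the theorem (and the paper's proof) covers every odd prime. If in your auxiliary you retain the term $(-1)^{b}H(a;p-1)H(b;\tfrac{p-1}{2})$ instead of discarding it, you recover the paper's decomposition after one application of the half-range shuffle, and both detours become unnecessary.
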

\begin{proof} The shuffling relation given by $H(-a;p-1)^2$ yields the second equation.
As regards the first equation we simply observe that $(-1)^i/i^a$  is positive if and only if $i$ is even.
We use a similar argument for the congruence: since  $(-1)^{i+j}/(i^a j^b)$ is positive if and only
if $i$ and $j$ are both even or if $(p-i)$ and $(p-j)$ are both even then
$$H(-a,-b;p-1) \equiv -H(a,b;p-1)+
{2\over 2^{a+b}}\left(
H\left(a,b;{p-1\over 2}\right)+(-1)^{a+b}H\left(b,a;{p-1\over 2}\right)
\right).$$
Moreover, by decomposing the sum $H(a,b;p-1)$ we obtain
$$
H(a,b;p-1)\equiv H\left(a,b;{p-1\over 2}\right)+
H\left(a;{p-1\over 2}\right)(-1)^b H\left(b;{p-1\over 2}\right)
+(-1)^{a+b}H\left(b,a;{p-1\over 2}\right).
$$
that is 
$$H\left(a,b;{p-1\over 2}\right)+(-1)^{a+b}H\left(b,a;{p-1\over 2}\right)
\equiv H(a,b;p-1)-H\left(a;{p-1\over 2}\right)(-1)^b H\left(b;{p-1\over 2}\right).$$
and the congruence follows immediately.
\end{proof}

\begin{cor}\label{C22} For any prime $p>3$
\begin{align*}
&H(-1;p-1)\equiv -2q_p(2)+pq_p(2)^2-{2\over 3}p^2 q_p(2)^3-{1\over 4}p^2\,B_{p-3} &\pmod{p^3}\,, \\
&H(-1,-1;p-1)\equiv 2q_p(2)^2-2pq_p(2)^3-{1\over 3}p\,B_{p-3}&\pmod{p^2}.
\end{align*}
Moreover for $a>1$ and for any prime $p>a+1$
$$H(-a;p-1)\equiv -{2^a-2\over a2^{a-1}}B_{p-a} \pmod{p}.$$
\end{cor}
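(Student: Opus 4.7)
The strategy is to apply the first identity of Theorem~\ref{T21}, namely
$$H(-a;p-1)=-H(a;p-1)+\frac{1}{2^{a-1}}H\!\left(a;\tfrac{p-1}{2}\right),$$
and feed into it the known closed forms (i) and (ii) quoted just before Theorem~\ref{T21}. For the first congruence I take $a=1$ and work modulo $p^3$: formula (i) with $a=r=1$ gives $H(1;p-1)\equiv-\tfrac{1}{3}p^2B_{p-3}\pmod{p^3}$, while (ii) supplies the full mod~$p^3$ expansion of $H\bigl(1;\tfrac{p-1}{2}\bigr)$ in terms of $q_p(2)$ and $B_{p-3}$. Adding the two and collecting the $B_{p-3}$ coefficient $(\tfrac{1}{3}-\tfrac{7}{12})=-\tfrac{1}{4}$ produces exactly the claimed expression.

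For the third congruence I keep $a>1$ and work modulo $p$. When $a$ is odd, (i) forces $H(a;p-1)\equiv 0\pmod p$ since its leading term is $p^2B_{p-a-2}$, and (ii) yields $H\bigl(a;\tfrac{p-1}{2}\bigr)\equiv -\tfrac{2^a-2}{a}B_{p-a}\pmod p$, so dividing by $2^{a-1}$ gives the stated formula. When $a$ is even, both $H(a;p-1)$ and $H\bigl(a;\tfrac{p-1}{2}\bigr)$ are divisible by $p$ (the leading terms are $pB_{p-a-1}$), so $H(-a;p-1)\equiv 0\pmod p$; this agrees with the claim because $p-a$ is then an odd index greater than~$1$, making $B_{p-a}=0$.

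For the second congruence I invoke the shuffle identity $2H(-1,-1;p-1)=H(-1;p-1)^2-H(2;p-1)$. Squaring the already-established $H(-1;p-1)\equiv -2q_p(2)+pq_p(2)^2\pmod{p^2}$ gives $4q_p(2)^2-4pq_p(2)^3\pmod{p^2}$, while (i) with $a=2,r=1$ supplies $H(2;p-1)\equiv \tfrac{2}{3}pB_{p-3}\pmod{p^2}$. Subtracting and halving yields the required $2q_p(2)^2-2pq_p(2)^3-\tfrac{1}{3}pB_{p-3}$.

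\textbf{Main obstacle.} There is no conceptual obstacle; all three parts reduce to substitution into Theorem~\ref{T21} followed by the tabulated values. The only care needed is in the first part, where I must retain all three layers (constant, $p$, $p^2$) of the mod~$p^3$ expansion of $H\bigl(1;\tfrac{p-1}{2}\bigr)$ and combine the $B_{p-3}$ contributions from $H(1;p-1)$ and $H\bigl(1;\tfrac{p-1}{2}\bigr)$ with the correct sign, which is the arithmetic step most prone to error.
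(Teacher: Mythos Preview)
Your proposal is correct and follows exactly the route the paper indicates: apply the two identities of Theorem~\ref{T21} and plug in the values from (i) and (ii). The arithmetic you spell out (including the $\tfrac{1}{3}-\tfrac{7}{12}=-\tfrac{1}{4}$ step and the even/odd case split for $a>1$) is precisely what the paper's one-line ``straightforward'' proof leaves to the reader.
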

\begin{proof} The proof is straightforward: apply Theorem \ref{T21}, (i), (ii), and (iii). 
\end{proof}

The following theorem is a variation of a result presented in \cite{ZhSuzw:09}.

\begin{thm}\label{T23} Let $r>0$ then for any prime $p>r+1$
$$H(\{1\}^{r-1},-1;p-1)\equiv (-1)^{r-1}\sum_{k=1}^{p-1}{2^k\over k^r} \pmod{p}.$$
\end{thm}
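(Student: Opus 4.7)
The plan is to interpret both sides of the congruence as coefficients of a single polynomial identity modulo $p$. I introduce the polynomial
$$F(x)=\sum_{k=1}^{p-1}\frac{(-1)^k}{k}\binom{k+x-1}{k-1}$$
of degree $p-2$, whose coefficients are $p$-integral rationals. Expanding $\binom{k+x-1}{k-1}=\prod_{i=1}^{k-1}(1+x/i)=\sum_{s\ge 0}H(\{1\}^s;k-1)\,x^s$ and interchanging the order of summation gives $[x^{r-1}]F(x)=H(\{1\}^{r-1},-1;p-1)$ for $1\le r\le p-1$, so the left-hand side of the theorem is encoded in $F$.

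For the right-hand side, I consider the companion polynomial
$$Q(x)=\sum_{k=1}^{p-1}2^k\prod_{\substack{i=1\\i\ne k}}^{p-1}(x+i)\in\mathbb{F}_p[x].$$
The Fermat-type identity $\prod_{i=1}^{p-1}(x+i)\equiv x^{p-1}-1\pmod p$ shows that $\prod_{i\ne k}(x+i)\equiv(x^{p-1}-1)/(x+k)\pmod p$; polynomial long division then yields $[x^r]\prod_{i\ne k}(x+i)\equiv(-1)^{r+1}/k^{r+1}\pmod p$ for $0\le r\le p-2$. Summing over $k$,
$$[x^{r-1}]Q(x)\equiv(-1)^r\sum_{k=1}^{p-1}\frac{2^k}{k^r}\pmod p,$$
and the theorem is therefore equivalent to the polynomial identity $F(x)+Q(x)\equiv -x^{p-2}\pmod p$, since extracting the $x^{r-1}$-coefficient for $1\le r\le p-2$ (i.e.\ whenever $p>r+1$) yields exactly $H(\{1\}^{r-1},-1;p-1)\equiv(-1)^{r-1}\sum 2^k/k^r\pmod p$.

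To establish this polynomial identity I would evaluate $F+Q$ at the $p-1$ points $x=-k$, $k=1,\dots,p-1$. For $Q(-k)$ only the $k$-th summand survives (every other product contains the factor $-k+k=0$); Wilson's theorem gives $\prod_{i\ne k}(i-k)\equiv 1/k\pmod p$, whence $Q(-k)\equiv 2^k/k$. For $F(-k)$ the binomial reflection $\binom{j-k-1}{j-1}=(-1)^{j-1}\binom{k-1}{j-1}$ collapses the sum to $j\le k$, and then $\binom{k-1}{j-1}/j=\binom{k}{j}/k$ combined with $\sum_{j=1}^{k}\binom{k}{j}=2^k-1$ gives $F(-k)=(1-2^k)/k$. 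Hence $(F+Q)(-k)\equiv 1/k\pmod p$, so the polynomial $-x(F+Q)(x)-1$ has degree at most $p-1$ and vanishes at $-1,-2,\dots,-(p-1)$; it must therefore equal $c(x^{p-1}-1)$ in $\mathbb{F}_p[x]$. Matching constant terms fixes $c=1$, yielding $F(x)+Q(x)\equiv -x^{p-2}\pmod p$ as required.

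The main obstacle is guessing the right polynomial identity to aim for, in particular identifying the precise remainder $-x^{p-2}$ (note that $F$ and $Q$ do not agree as polynomials, only their first $p-2$ Taylor coefficients do). Once the set-up is in place, the two evaluations $F(-k)$ and $Q(-k)$ require only standard binomial manipulations and Wilson's theorem, and the final Lagrange interpolation argument is routine.
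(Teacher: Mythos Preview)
Your argument is correct, and it is genuinely different from the paper's. The paper fixes $r$ and introduces the polynomials
\[
F_r(x)=\sum_{0<k_1<\dots<k_r<p}\frac{x^{k_r}}{k_1\cdots k_r},\qquad f_r(x)=\sum_{0<k<p}\frac{x^k}{k^r},
\]
then proves by induction on $r$ (via formal differentiation and the vanishing of $H(\{1\}^{r-1};p-1)$ mod $p$) the functional identity $F_r(x)\equiv(-1)^{r-1}f_r(1-x)\pmod p$, finally specializing at $x=-1$. You instead package \emph{all} depths $r$ into a single pair of degree-$(p-2)$ polynomials $F,Q$ in an auxiliary variable, and replace the inductive differentiation step by a Lagrange interpolation: evaluating $F+Q$ at the $p-1$ points $-1,\dots,-(p-1)$ and invoking Wilson's theorem pins down $F+Q\equiv -x^{p-2}$.

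Each approach has its own payoff. The paper's identity $F_r(x)\equiv(-1)^{r-1}f_r(1-x)$ is strictly more general --- it holds for every $x$, so it can be specialized elsewhere than $x=-1$. Your route, on the other hand, is induction-free, treats all admissible $r$ in one stroke, and needs only Wilson's theorem plus the elementary evaluations $F(-k)=(1-2^k)/k$ and $Q(-k)\equiv 2^k/k$; the key polynomial identity $F+Q\equiv -x^{p-2}$ is a pleasant bonus. Both proofs are short and of comparable difficulty.
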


\begin{proof} For $r\geq 1$, let
$$F_r(x)=\sum_{0<k_1<\dots<k_r<p} {x^{k_r}\over k_1\cdots k_r} \in \ZZ_p[x]
\quad\mbox{and}\quad f_r(x)=\sum_{0<k<p} {x^k\over k^r}\in \ZZ_p[x].$$ 
We show by induction that
$$F_r(x)\equiv (-1)^{r-1} f_r(1-x) \pmod{p}$$
then our congruence follows by taking $x=-1$.

\noindent For $r=1$, since ${p\choose k}=(-1)^{k-1}{p\over k}\pmod{p^2}$ for $0<k<p$ then
$$f_1(x)\equiv {1\over p}\sum_{k=1}^{p-1}(-1)^{k-1}{p\choose k}{x^k}
=-{1\over p}\sum_{k=1}^{p-1}{p\choose k}{(-x)^k}=
{1-(1-x)^p-x^p\over p}\pmod{p}.$$
Hence $F_1(x)=f_1(x)\equiv f_1(1-x)\pmod{p}$. 

\noindent Assume that $r>1$, then the formal derivative yields
\begin{eqnarray*}
{d\over dx} F_r(x)&=&\sum_{0<k_1<\dots<k_r<p} {k_rx^{k_r-1}\over k_1\cdots k_r}
=\sum_{0<k_1<\dots<k_{r-1}<p} {1\over k_1\cdots k_{r-1}}\sum_{k_r=k_{r-1}+1}^{p-1} x^{k_r-1}\\
&=&\sum_{0<k_1<\dots<k_{r-1}<p} {1\over k_1\cdots k_{r-1}}\cdot {x^{p-1}-x^{k_{r-1}}\over x-1}\\
&=&{x^{p-1}\over x-1}\, H(\{1\}^{r-1};p-1)-{1\over x-1}\, F_{r-1}(x)
\equiv {F_{r-1}(x)\over 1-x} \pmod{p}.
\end{eqnarray*}
Moreover
$${d\over dx} f_r(1-x)=-\sum_{0<k<p} {(1-x)^{k-1}\over k^{r-1}}=-{f_{r-1}(1-x)\over 1-x}$$
Hence, by the induction hypothesis
$$
(1-x){d\over dx} \left(F_r(x)+(-1)^r f_r(1-x)\right)\equiv
F_{r-1}(x)+(-1)^{r-1} f_{r-1}(1-x)\equiv 0 \pmod{p}.
$$
Thus $F_r(x)+(-1)^r f_r(1-x)\equiv c_1$ (mod $p$) for some constant $c_1$ since this polynomial
has degree $<p$. Substituting in $x=0$ we find that by (i)
$$F_r(x)+(-1)^r f_r(1-x)\equiv c_1\equiv F_r(0)+(-1)^r f_r(1)=(-1)^r H(r;p-1)\equiv 0 \pmod{p}.$$
\end{proof}

With the next two corollaries we have a complete the list of the mod $p$ values of 
the alternating multiple harmonic sums of depth and weight $\leq 3$.

\begin{cor}\label{C24} The following congruences mod $p$ hold for any prime $p>3$
\begin{align*}
&H(1,-1;p-1)\equiv -H(-1,1;p-1)\equiv q_p(2)^2\,, \\
&H(-1,2;p-1)\equiv H(1,-2;p-1)\equiv H(2,-1;p-1)\equiv H(-2,1;p-1)\equiv {1\over 4} B_{p-3}\,,\\
&H(-1,-2;p-1)\equiv -H(-2,-1;p-1)\equiv -{3\over 4}B_{p-3}\,.
\end{align*}
\end{cor}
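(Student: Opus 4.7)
My plan is to handle the weight-$2$ pair first and then the six weight-$3$ sums separately.

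For $H(1,-1;p-1)$ and $H(-1,1;p-1)$, the reversal relation immediately gives $H(-1,1;p-1) \equiv -H(1,-1;p-1) \pmod p$, while the shuffle of $H(1;p-1)H(-1;p-1)$ vanishes on both sides modulo $p$ and yields no new information. To pin down the common absolute value I would refine to mod $p^2$: sharpening the identity used in the proof of Theorem \ref{T23} to $\binom{p}{k} \equiv (-1)^{k-1}\tfrac{p}{k}(1 - pH(1;k-1)) \pmod{p^3}$ for $1 \le k \le p-1$, summing over $k$ and dividing by $p$ gives
$$
2q_p(2) \;\equiv\; -H(-1;p-1) + pH(1,-1;p-1) \pmod{p^2},
$$
once one recognises $\sum_{k=1}^{p-1}\tfrac{(-1)^{k-1}}{k}H(1;k-1) = -H(1,-1;p-1)$. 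Corollary \ref{C22} supplies $H(-1;p-1) \equiv -2q_p(2) + pq_p(2)^2 \pmod{p^2}$, and comparing the two expressions yields $H(1,-1;p-1) \equiv q_p(2)^2 \pmod p$.

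For the four mixed-sign weight-$3$ sums I would apply the shuffles $H(-1;p-1)\cdot H(2;p-1)$ and $H(1;p-1)\cdot H(-2;p-1)$. Both products vanish modulo $p$, since $H(1;p-1)$, $H(2;p-1)$ and $H(-2;p-1) \equiv 0 \pmod p$ (the last from Corollary \ref{C22}, using $B_{p-2}=0$). The shuffle expansions then read $H(-1,2;p-1) + H(2,-1;p-1) \equiv -H(-3;p-1) \equiv \tfrac12 B_{p-3}$ and $H(1,-2;p-1) + H(-2,1;p-1) \equiv \tfrac12 B_{p-3} \pmod{p}$, and the reversal relations force the two summands in each pair to coincide, giving the common value $\tfrac14 B_{p-3}$.

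Finally, I would invoke Theorem \ref{T21} with $a=1,\,b=2$:
$$H(-1,-2;p-1) \equiv -\tfrac34 H(1,2;p-1) - \tfrac14 H\!\left(1;\tfrac{p-1}{2}\right)H\!\left(2;\tfrac{p-1}{2}\right) \pmod{p}.$$
Item (iii) yields $H(1,2;p-1) \equiv B_{p-3}$, while (ii) yields $H(2;(p-1)/2) \equiv \tfrac73 pB_{p-3} \equiv 0 \pmod p$, killing the second term. Hence $H(-1,-2;p-1) \equiv -\tfrac34 B_{p-3}$, and reversal returns $H(-2,-1;p-1) \equiv \tfrac34 B_{p-3}$. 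The delicate step is $H(1,-1;p-1)$: first-order shuffle/reversal machinery collapses there to a tautology, so the argument has to push to mod $p^2$ and bring in the $pq_p(2)^2$ correction in $H(-1;p-1)$ supplied by Corollary \ref{C22}.
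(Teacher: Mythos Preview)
Your proof is correct. For the weight-$3$ cases your argument coincides with the paper's: the mixed-sign sums via the shuffles $H(-1;p-1)H(2;p-1)$ and $H(1;p-1)H(-2;p-1)$ together with reversal, and the all-negative case via Theorem~\ref{T21} with $H(2;(p-1)/2)\equiv 0$ and $H(1,2;p-1)\equiv B_{p-3}$.

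The genuine difference is in how you obtain $H(1,-1;p-1)\equiv q_p(2)^2$. The paper applies Theorem~\ref{T23} with $r=2$ to write $H(1,-1;p-1)\equiv -\sum_{k=1}^{p-1}2^k/k^2$ and then cites Granville's result \cite{Gr:04} that this sum is $-q_p(2)^2$ modulo~$p$. You instead expand $2q_p(2)=\tfrac1p\sum_{k=1}^{p-1}\binom{p}{k}$ to modulus~$p^2$ via the refinement $\binom{p}{k}\equiv(-1)^{k-1}\tfrac{p}{k}\bigl(1-pH(1;k-1)\bigr)$, and compare with the mod-$p^2$ value of $H(-1;p-1)$ supplied by Corollary~\ref{C22}. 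Your route is self-contained within the paper's own toolkit and in effect re-derives Granville's congruence as a byproduct, at the price of the mod-$p^2$ detour; the paper's route is shorter but leans on an external reference. Both are perfectly valid, and your remark that first-order shuffle/reversal collapses to a tautology for this pair is exactly the reason the paper reaches outside.
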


\begin{proof} By Theorem \ref{T23} and  by \cite{Gr:04} 
$$H(1,-1;p-1)\equiv-\sum_{k=1}^{p-1}{2^k\over k^2}\equiv q_p(2)^2 \pmod{p}.$$
By (i) and by the shuffling relation given by the product $H(-1;p-1)H(2;p-1)$ we get
$$H(-1,2;p-1)={1\over 2}H(-1;p-1)H(2;p-1)-{1\over 2}\; H(-3;p-1)\equiv {1\over 4} B_{p-3} \pmod{p}.$$
By (ii) and by Theorem \ref{T21}
$$H(-1,-2;p-1)\equiv-{3\over 4}H(1,2;p-1)-{1\over 4} H\left(1;{p-1\over 2}\right)H\left(2;{p-1\over 2}\right)
\equiv -{3\over 4}B_{p-3}
\pmod{p}.$$
The remaining congruences follow by applying the reversal relation of depth $2$.
\end{proof}

\begin{cor}\label{C25} The following congruences mod $p$ hold for any prime $p>3$
\begin{align*}
&H(-1,1,-1;p-1)\equiv 0 \,,\\
&H(1,1,-1;p-1)\equiv H(-1,1,1;p-1)\equiv -{1\over 3}q_p(2)^3-{7\over 24}B_{p-3}\,, \\
&H(-1,-1,1;p-1)\equiv -H(1,-1,-1;p-1)\equiv q_p(2)^3+{7\over 8}B_{p-3}\,, \\
&H(1,-1,1;p-1)\equiv {2\over 3}q_p(2)^3+{1\over 12}B_{p-3}\,,\\
&H(-1,-1,-1;p-1)\equiv -{4\over 3}q_p(2)^3-{1\over 6}B_{p-3}\,. 
\end{align*}
\end{cor}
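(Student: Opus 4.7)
The plan is to combine three tools already at hand in the paper: the reversal relation at depth $3$, the stuffle product of a depth-$2$ sum with a depth-$1$ sum, and Theorem \ref{T23} at $r=3$ fed by the mod-$p$ evaluation of $\sum_{k=1}^{p-1}2^k/k^3$ from \cite{Gr:04}.

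First I would apply the reversal relation $H(a,b,c;p-1)\equiv(-1)^{a+b+c}\sign(abc)\,H(c,b,a;p-1)\pmod p$ to reduce the eight $\pm 1$-indexed depth-$3$ sums to four unknowns. For $(a,b,c)=(-1,1,-1)$ the prefactor is $-1$, so $H(-1,1,-1;p-1)\equiv -H(-1,1,-1;p-1)\pmod p$ and the sum vanishes since $p\ne 2$; the same relation identifies $H(1,1,-1;p-1)\equiv H(-1,1,1;p-1)$ and $H(-1,-1,1;p-1)\equiv -H(1,-1,-1;p-1)$, leaving the four independent values $H(1,1,-1)$, $H(1,-1,1)$, $H(-1,-1,1)$, $H(-1,-1,-1)$ to be pinned down.

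Next, Theorem \ref{T23} with $r=3$ reduces $H(1,1,-1;p-1)$ to $\sum_{k=1}^{p-1}2^k/k^3\pmod p$, which by \cite{Gr:04} equals $-\frac{1}{3}q_p(2)^3-\frac{7}{24}B_{p-3}$, settling the second line of the corollary. For the remaining three values I would run stuffle products of the form $H(a,b;p-1)\cdot H(c;p-1)$ and substitute the depth-$\le 2$ sums from Corollaries \ref{C22} and \ref{C24} and from (i)--(iii). The product $H(1,-1)\cdot H(-1)\equiv -2q_p(2)^3\pmod p$ stuffles to $H(-1,1,-1)+2H(1,-1,-1)+H(-2,-1)+H(1,2)$, and using $H(-2,-1)\equiv\frac{3}{4}B_{p-3}$ and $H(1,2)\equiv B_{p-3}$ isolates $H(-1,-1,1;p-1)\equiv q_p(2)^3+\frac{7}{8}B_{p-3}$. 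The product $H(-1,-1)\cdot H(-1)\equiv -4q_p(2)^3\pmod p$ stuffles to $3H(-1,-1,-1)+H(2,-1)+H(-1,2)$ and directly yields the displayed value of $H(-1,-1,-1;p-1)$. Finally, since $H(1;p-1)\equiv 0\pmod{p^2}$, the product $H(1,-1)\cdot H(1)\equiv 0\pmod p$ stuffles to $2H(1,1,-1)+H(1,-1,1)+H(2,-1)+H(1,-2)$, and substituting the already-known $H(1,1,-1)$ gives $H(1,-1,1;p-1)\equiv\frac{2}{3}q_p(2)^3+\frac{1}{12}B_{p-3}$.

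The main obstacle is the separation of $H(1,1,-1;p-1)$ from $H(1,-1,1;p-1)$: every stuffle among depth-$\le 2$ alternating sums with indices in $\{\pm 1,\pm 2\}$ supplies only the single linear combination $2H(1,1,-1;p-1)+H(1,-1,1;p-1)\equiv -\frac{1}{2}B_{p-3}\pmod p$, so an external input is indispensable, and this is precisely what Theorem \ref{T23} combined with the mod-$p$ value of $\sum_{k=1}^{p-1}2^k/k^3$ provides. Everything else reduces to routine bookkeeping of the $\oplus$ operation applied to stuffle relations.
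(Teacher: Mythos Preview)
Your approach is essentially identical to the paper's: the same reversal reductions, the same three stuffle products $H(1,-1)H(-1)$, $H(1,-1)H(1)$, $H(-1,-1)H(-1)$, and the same appeal to Theorem~\ref{T23} at $r=3$ for the one value that stuffles alone cannot separate. The only slip is the citation: the mod-$p$ value of $\sum_{k=1}^{p-1}2^k/k^3$ is the \emph{cube} result of Dilcher--Skula \cite{DiSk:06}, not Granville \cite{Gr:04}, which treats $\sum_{k=1}^{p-1}2^k/k^2$.
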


\begin{proof} 
By the reversal relation of depth 3,
$H(-1,1,-1;p-1)\equiv -H(-1,1,-1;p-1)\equiv 0$.
By Theorem \ref{T23} and by \cite{DiSk:06}
$$H(1,1,-1;p-1)\equiv\sum_{k=1}^{p-1}{2^k\over k^3}\equiv -{1\over 3}q_p(2)^3+{7\over 12}H(-3,p-1)
\equiv -{1\over 3}q_p(2)^3-{7\over 24}B_{p-3}\pmod{p}.$$
By the shuffling relations given by the products
$$H(1,-1;p-1)H(-1;p-1),\;H(1,-1;p-1)H(1;p-1),\,\mbox{and}\;H(-1,-1;p-1)H(-1;p-1)$$
we respectively find that
\begin{align*}
&2H(1,-1,-1;p-1)\equiv H(1,-1;p-1)H(-1;p-1)-H(1,2;p-1)-H(-2,-1;p-1)\,, \\
&H(1,-1,1;p-1)\equiv  -2H(1,1,-1,p-1)-2H(2,-1;p-1)\,,\\
&3H(-1,-1,-1;p-1)\equiv H(-1,-1;p-1)H(-1;p-1)-2H(2,-1;p-1).
\end{align*}
The remaining congruences follow by applying the reversal relation of depth 3.
\end{proof}

\section{Proof of Theorem \ref{T11}}\label{sec:two}

The following useful identity appears in \cite{SuzwTa:09}. Here 
we give an alternate proof by using Riordan's array method 
(see \cite{Sp:94} for more examples of this technique).

\begin{thm}\label{T31} Let  $n\geq d>0$
$$ d\sum_{k=1}^{n} {2k \choose k+d}\,{x^{n-k}\over k}
=\sum_{k=0}^{n-d} {2n \choose n+d+k} v_k-{2n \choose n+d}$$
where $v_0=2$, $v_1=x-2$ and $v_{k+1}=(x-2)v_k-v_{k-1}$ for $k\geq 1$.
\end{thm}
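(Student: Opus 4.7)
My plan is to pass to generating functions in $n$ (with $x$ and $d$ held fixed) and check that both sides have the same closed form. The central tool is the Catalan generating function $C(z)=(1-\sqrt{1-4z})/(2z)$, which satisfies the algebraic relations $zC^2=C-1$ and $\sqrt{1-4z}=(2-C)/C$, together with the standard evaluation $\sum_{n\geq 0}\binom{2n}{n+m}z^n=(C-1)^m/\sqrt{1-4z}$ for every $m\geq 0$. In the language of Riordan arrays, this says that the matrix $\binom{2n}{n+d+k}$ is the $(n,k)$-entry of $\bigl((C-1)^d/\sqrt{1-4z},\,C-1\bigr)$, which is exactly the setup required for Sprugnoli's method.

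To handle the right-hand side, I would first solve the recurrence for $v_k$ by writing $v_k=t^k+t^{-k}$ with $t+t^{-1}=x-2$, obtaining
\[
V(z):=\sum_{k\geq 0}v_k z^k=\frac{1}{1-tz}+\frac{1}{1-z/t}=\frac{2-(x-2)z}{1-(x-2)z+z^2}.
\]
Summing over $n$ and applying the fundamental theorem of Riordan arrays then gives
\[
\sum_{n\geq d}\Bigl(\sum_{k=0}^{n-d}\binom{2n}{n+d+k}v_k-\binom{2n}{n+d}\Bigr)z^n=\frac{(C-1)^d\bigl(V(C-1)-1\bigr)}{\sqrt{1-4z}}.
\]
The key algebraic step, which depends on nothing beyond $zC^2=C-1$, is the factorization
\[
1-(x-2)(C-1)+(C-1)^2=C^2(1-xz);
\]
combined with $1-(C-1)^2=C(2-C)$ and $\sqrt{1-4z}=(2-C)/C$, it collapses the right-hand generating function to $(C-1)^d/(1-xz)$.

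For the left-hand side, interchanging the two summations yields
\[
\sum_{n\geq d}\Bigl(d\sum_{k=1}^{n}\binom{2k}{k+d}\frac{x^{n-k}}{k}\Bigr)z^n=\frac{d}{1-xz}\sum_{k\geq d}\binom{2k}{k+d}\frac{z^k}{k},
\]
so it suffices to prove the univariate identity $d\sum_{k\geq d}\binom{2k}{k+d}z^k/k=(C-1)^d$. This is immediate from Lagrange inversion applied to the functional equation $u=z(1+u)^2$ (with $u=C-1$), since it gives $[z^n](C-1)^d=(d/n)\binom{2n}{n-d}$. Equating the two generating functions and extracting the coefficient of $z^n$ proves the theorem. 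The one nontrivial point is spotting the factorization that produces the $(1-xz)$ factor on the right; once that is in hand, the matching with the geometric series $1/(1-xz)$ on the left, and hence the Riordan-array proof, becomes immediate.
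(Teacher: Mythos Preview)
Your argument is correct, and it is genuinely different from the paper's proof even though both invoke the Riordan-array philosophy and Sprugnoli's paper. The paper works at a \emph{fixed} $n$: it writes $\binom{2k}{k+d}$ as a formal residue $[z^{-1}]\,\dfrac{z^{d-1}}{(1-z)^{d+1}}\bigl(\tfrac{1}{z(1-z)}\bigr)^{k}$, uses the ``residue of a derivative is zero'' trick to collapse the left-hand side to $[z^{-1}]\,\dfrac{z^{d-n-1}}{(1-z)^{n+d+1}}\cdot\dfrac{1-2z}{1-xz+xz^{2}}$, and then evaluates the right-hand side as the \emph{same} residue via the generating series $F(z)=\sum_k v_k z^k=\dfrac{2-(x-2)z}{1-(x-2)z+z^{2}}$ composed with $z/(1-z)$. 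Your route instead sums over all $n$, recognises the column generating functions $(C-1)^{m}/\sqrt{1-4z}$ of the Riordan pair $\bigl((C-1)^{d}/\sqrt{1-4z},\,C-1\bigr)$, and reduces everything to the single algebraic identity $1-(x-2)(C-1)+(C-1)^{2}=C^{2}(1-xz)$ together with Lagrange inversion for $(C-1)^{d}$. The paper's approach avoids Lagrange inversion and never leaves the level of a single $n$; your approach is more mechanical once the Catalan toolkit is in hand, and the factorisation you isolate is exactly the analogue of the paper's observation that $F\!\left(\tfrac{z}{1-z}\right)-1=\dfrac{1-2z}{1-xz+xz^{2}}-\dfrac{z(1-2z)}{(1-z)^{2}}\cdot\dfrac{1}{\,\cdots\,}$ simplifies. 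Either proof would be acceptable here.
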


\begin{proof} We first note that
\begin{eqnarray*}
{2k \choose k+d}&=&
{2k \choose k-d}=(-1)^{k-d}{-k-d-1 \choose k-d}\\
&=& [z^{k-d}]{1\over (1-z)^{k+d+1}}=[z^{-1}]
{z^{d-1}\over(1-z)^{d+1}}\cdot \left({1 \over z(1-z)}\right)^k.
\end{eqnarray*}
Since the residue of a derivative is zero then  
\begin{eqnarray*}
d\sum_{k=1}^{n} {2k \choose k+d}\,{x^{n-k}\over k}
&=&[z^{-1}]\, x^n\, {dz^{d-1}\over (1-z)^{d+1}}\, G\left({1\over x z(1-z)}\right)\\
&=&-[z^{-1}]\, x^n\, {z^{d}\over (1-z)^{d}}\, G'\left({1\over x z(1-z)}\right)\cdot \left({1\over x z(1-z)}\right)'\\
&=&[z^{-1}]\, {z^{d-n-1}\over (1-z)^{n+d+1}}\, {1-x^nz^n(1-z)^n\over 1-xz+xz^2}\cdot (1-2z)\\
&=&[z^{-1}]\, {z^{d-n-1}\over (1-z)^{n+d+1}}\, {1-2z\over 1-xz+xz^2}.
\end{eqnarray*}
where $G(z)=\sum_{k=1}^n {z^k\over k}$ and $G'(z)=\sum_{k=1}^n z^{k-1}={1-z^n\over 1-z}$. 
Moreover
\begin{eqnarray*}
{2n\choose n+d+k}&=&{2n \choose n-d-k}=(-1)^{n-d-k} {-n-d-k-1\choose n-d-k}\\
&=&[z^{n-d-k}]{1\over (1-z)^{n+d+k+1}}=
[z^{-1}]{z^{d-n-1}\over (1-z)^{n+d+1}}\cdot \left({z\over 1-z}\right)^k
\end{eqnarray*}
Letting $F(z)=\sum_{k=0}^{\infty} v_k z^k={2-(x-2)z\over 1-(x-2)z+z^2}$ then
\begin{eqnarray*}
\sum_{k=0}^{n-d} {2n \choose n+d+k} v_k-{2n \choose n+d}
&=&
[z^{-1}]{z^{d-n-1}\over (1-z)^{n+d+1}}\cdot F\left({z\over 1-z}\right)
-[z^{-1}]{z^{d-n-1}\over (1-z)^{n+d+1}}\\
&=&[z^{-1}]\, {z^{d-n-1}\over (1-z)^{n+d+1}}\,\left({(2-xz)(1-z)\over 1-xz+xz^2}-1\right)\\
&=&[z^{-1}]\, {z^{d-n-1}\over (1-z)^{n+d+1}}\, {1-2z\over 1-xz+xz^2}
.
\end{eqnarray*}
\end{proof}

\begin{cor}\label{C32} For any $n>0$
$$4^n\sum_{k=1}^{n} {-{1\over 2} \choose k} \,{(-1)^k\over k}=
-4(-1)^{n}\sum_{d=0}^{n-1}{(-1)^{d}\over n-d}\sum_{j=0}^{d-1} {2n \choose j}-2(-1)^{n}\sum_{d=0}^{n-1}{(-1)^{d}\over n-d}{2n \choose d}.
$$
\end{cor}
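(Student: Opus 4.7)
The plan is to reduce the identity to a direct application of Theorem \ref{T31} at $x=4$. First, the standard identity $\binom{-1/2}{k}=(-1)^k 4^{-k}\binom{2k}{k}$ rewrites the left-hand side as $\sum_{k=1}^n \binom{2k}{k}\,4^{n-k}/k$. Since Theorem \ref{T31} only handles $\sum_k \binom{2k}{k+d}\,4^{n-k}/k$ for $d\geq 1$, I re-express $\binom{2k}{k}$ as an alternating sum: from $(1-1)^{2k}=0$ together with the symmetry $\binom{2k}{k-d}=\binom{2k}{k+d}$ one obtains
$$\binom{2k}{k} = 2\sum_{d=1}^{k}(-1)^{d-1}\binom{2k}{k+d}.$$
Substituting this and interchanging the order of summation gives
$$\sum_{k=1}^n \binom{2k}{k}\frac{4^{n-k}}{k} = 2\sum_{d=1}^{n}(-1)^{d-1}\sum_{k=1}^{n}\binom{2k}{k+d}\frac{4^{n-k}}{k}.$$

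Next, apply Theorem \ref{T31} at $x=4$. The recurrence $v_{k+1}=2v_k-v_{k-1}$ with $v_0=v_1=2$ is constant, so $v_k=2$ for every $k$, and therefore
$$\sum_{k=1}^{n}\binom{2k}{k+d}\frac{4^{n-k}}{k} = \frac{1}{d}\left[2\sum_{k=0}^{n-d}\binom{2n}{n+d+k}-\binom{2n}{n+d}\right].$$
The symmetry $\binom{2n}{j}=\binom{2n}{2n-j}$ converts the inner sum into $\sum_{m=0}^{n-d}\binom{2n}{m}$ and $\binom{2n}{n+d}$ into $\binom{2n}{n-d}$.

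The remainder is a reindexing. Setting $d'=n-d$ in the outer sum turns $\frac{(-1)^{d-1}}{d}$ into $-(-1)^{n}\frac{(-1)^{d'}}{n-d'}$ and the partial sum into $\sum_{m=0}^{d'}\binom{2n}{m}$, while $\binom{2n}{n-d}$ becomes $\binom{2n}{d'}$. Splitting $\sum_{m=0}^{d'}\binom{2n}{m}=\sum_{m=0}^{d'-1}\binom{2n}{m}+\binom{2n}{d'}$ absorbs the stray $-\binom{2n}{d'}$ contribution, and one reads off exactly the right-hand side of the corollary. The only delicate point is this final reconciliation — matching the upper limit $d-1$ on the inner partial sum in the target formula with the $d$ produced by Theorem \ref{T31} — so no deep obstacle is anticipated.
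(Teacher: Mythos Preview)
Your proposal is correct and follows essentially the same route as the paper's proof: both rewrite $(-1)^k\binom{-1/2}{k}=4^{-k}\binom{2k}{k}$, expand $\binom{2k}{k}$ as an alternating sum via $(1-1)^{2k}=0$, apply Theorem~\ref{T31} at $x=4$ (where $v_k\equiv 2$), and then use the symmetry $\binom{2n}{j}=\binom{2n}{2n-j}$ together with the substitution $d\mapsto n-d$. The only cosmetic difference is that the paper peels off the $k=0$ term from $\sum_{k=0}^{n-d}\binom{2n}{n+d+k}$ before reindexing, whereas you peel off the $m=d'$ term after; the bookkeeping is equivalent.
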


\begin{proof} Since
$$0=\sum_{d=-k}^{k}(-1)^{d}{2k\choose k+d}
={2k\choose k}+2\sum_{d=1}^{k} (-1)^d {2k\choose k+d}$$
then for any $n\geq k$
$$(-1)^k{-{1\over 2}\choose k}=4^{-k}{2k\choose k}=-2\cdot4^{-k}\sum_{d=1}^{n} (-1)^d {2k\choose k+d}.$$
For $x=4$ then $v_k=2$ for all $k\geq 0$ and by Theorem \ref{T31}
\begin{eqnarray*}
4^n\sum_{k=1}^{n} {(-1)^k\over k}{-{1\over 2} \choose k}&=&
-2\sum_{k=1}^{n}{4^{n-k}\over k}\sum_{d=1}^{n} (-1)^d {2k\choose k+d}
=-2\sum_{d=1}^{n}(-1)^d \sum_{k=1}^{n} {4^{n-k}\over k}{2k\choose k+d}\\
&=&-4\sum_{d=1}^{n}{(-1)^d\over d}\sum_{k=0}^{n-d} {2n \choose n+d+k}+2\sum_{d=1}^{n}{(-1)^d\over d}{2n \choose n+d}\\
&=&-4\sum_{d=1}^{n}{(-1)^d\over d}\sum_{k=1}^{n-d} {2n \choose n-d-k}-2\sum_{d=1}^{n}{(-1)^d\over d}{2n \choose n-d}\\
&=&-4(-1)^{n}\sum_{d=0}^{n-1}{(-1)^{d}\over n-d}\sum_{j=0}^{d-1} {2n \choose j}-2(-1)^{n}\sum_{d=0}^{n-1}{(-1)^{d}\over n-d}{2n \choose d}.
\end{eqnarray*}
\end{proof}

We will make use of the following lemma.

\begin{lem}\label{L33} For any prime $p\not =2$ and for $0<j<p$
$${2p\choose j}\equiv -2p{(-1)^j\over j}+4p^2{(-1)^j\over j}H(1;j-1)\pmod{p^3}$$
and
$${2p\choose p}\equiv 2-{4\over 3}p^3 B_{p-3} \pmod{p^4}.$$
\end{lem}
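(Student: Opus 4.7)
The plan is to prove both congruences by a direct expansion of the obvious product formulas for the binomial coefficients and then to invoke the known multiple-harmonic-sum congruences listed in (i).

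For the first statement, I would factor
$${2p\choose j}=\frac{2p\,(2p-1)(2p-2)\cdots (2p-j+1)}{j!}=-\frac{2p\,(-1)^{j}}{j}\prod_{i=1}^{j-1}\left(1-\frac{2p}{i}\right).$$
Since $0<j<p$, each $1/i$ is a $p$-adic unit, so expanding the product to second order in $p$ yields
$$\prod_{i=1}^{j-1}\left(1-\frac{2p}{i}\right)\equiv 1-2p\,H(1;j-1)+4p^{2}H(1,1;j-1)\pmod{p^{3}}.$$
Distributing $-2p(-1)^{j}/j$ and observing that the $p^{3}H(1,1;j-1)/j$ cross term is absorbed into the modulus gives the desired congruence (the minus sign of $(-1)^{j-1}=-(-1)^{j}$ is the only bookkeeping subtlety).

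For the second statement, I would write
$${2p\choose p}=\prod_{k=1}^{p}\frac{p+k}{k}=2\prod_{k=1}^{p-1}\left(1+\frac{p}{k}\right),$$
and expand mod $p^{4}$ to obtain
$${2p\choose p}\equiv 2\bigl(1+p\,H(1;p-1)+p^{2}H(1,1;p-1)+p^{3}H(1,1,1;p-1)\bigr)\pmod{p^{4}}.$$
Now I would feed in fact (i) with $a=1$: one has $H(1;p-1)\equiv -\tfrac{1}{3}p^{2}B_{p-3}\pmod{p^{3}}$, so $p\,H(1;p-1)\equiv -\tfrac{1}{3}p^{3}B_{p-3}\pmod{p^{4}}$; similarly $H(1,1;p-1)\equiv -\tfrac{1}{3}pB_{p-3}\pmod{p^{2}}$ contributes another $-\tfrac{1}{3}p^{3}B_{p-3}$; and $H(1,1,1;p-1)\equiv 0\pmod{p^{2}}$ forces $p^{3}H(1,1,1;p-1)\equiv 0\pmod{p^{5}}$. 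Adding up the two nonzero contributions and multiplying by $2$ yields $2-\tfrac{4}{3}p^{3}B_{p-3}$.

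The only delicate point is tracking the $p$-adic precision: at the $m$-th stage of each expansion, the coefficient is a symmetric function $H(\{1\}^{m};n)$ which must vanish to sufficiently high $p$-adic order for the truncation modulo $p^{3}$ (respectively $p^{4}$) to be legitimate. Fact (i) provides exactly the vanishing needed, so the argument reduces to careful bookkeeping rather than any genuinely new idea.
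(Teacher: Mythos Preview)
Your proof is correct and follows essentially the same approach as the paper: write the binomial coefficient as $-\tfrac{2p(-1)^j}{j}\prod_{k=1}^{j-1}(1-2p/k)$, expand the product in the elementary symmetric functions $H(\{1\}^m;\cdot)$, and invoke (i) to control the higher-order terms. The only cosmetic difference is that for ${2p\choose p}$ you use the factorization $2\prod_{k=1}^{p-1}(1+p/k)$ whereas the paper (implicitly) continues with $2\prod_{k=1}^{p-1}(1-2p/k)$; both collapse to $2-\tfrac{4}{3}p^3B_{p-3}$ after applying (i), and note that you only need $H(1,1,1;p-1)\equiv 0\pmod{p}$, not $\pmod{p^2}$, for the truncation mod $p^4$.
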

\begin{proof} It suffices to expand the binomial coefficient in this way
$${2p\choose j}=-2p{(-1)^j\over j}\prod_{k=1}^{j-1}\left(1-{2p\over k}\right)
={(-1)^j\over j}\sum_{k=1}^{j-1}(-2p)^k\,H(\{1\}^{k-1};j-1).
$$
and apply (i).
\end{proof}

\begin{proof}[{\sl Proof of Theorem 1.1.}] Letting $n=p$ in the identity given by Corollary \ref{C32} we obtain
$$4^p\sum_{k=1}^{p} {(-1)^k\over k}{-{1\over 2} \choose k}=
4\sum_{0\leq j<d<p}{(-1)^{d}\over p-d} {2p \choose j}+2\sum_{0\leq d<p}{(-1)^{d}\over p-d}{2p \choose d}.$$
that is
$$4^{p-1}\sum_{k=1}^{p-1} {(-1)^k\over k}{-{1\over 2} \choose k}={2-{2p \choose p}\over 4p}
-\sum_{0<d<p}{(-1)^{d}\over d}
+\sum_{0<j<d<p}{(-1)^{d}\over p-d} {2p \choose j}+{1\over 2}\sum_{0<d<p}{(-1)^{d}\over p-d}{2p \choose d}.$$
Now we consider each term of the r.h.s. separately. By Lemma \ref{L33}
$${2-{2p \choose p}\over 4p}\equiv {1\over 3}\,p^2 B_{p-3}\pmod{p^3}.$$
By (ii)
$$\sum_{0<d<p}{(-1)^{d}\over d}=H(-1;p-1)=-2q_p(2)+pq_p(2)^2-{2\over 3}\,p^2 q_p(2)^3-{1\over 4}\,p^2 B_{p-3}\pmod{p^3}.$$
Since for $0<d<p$
$${1\over p-d}=-{1\over d(1-{p\over d})}\equiv - {1\over d}-{p\over d^2} \pmod{p^2}$$
then by Lemma \ref{L33}, (i), and (iii) we have that
\begin{eqnarray*}
\sum_{0<d<p}{(-1)^{d}\over p-d}{2p \choose d}&\equiv&
\sum_{0<d<p}\left(-{(-1)^{d}\over d}-p{(-1)^{d}\over d^2}\right)
\left(-2p{(-1)^d\over d}+4p^2{(-1)^d\over d}H(1;d-1)\right)\\
&\equiv& 2p H(2;p-1)+2p^2 H(3;p-1)-4p^2 H(1,2;p-1)\\
&\equiv&-{8\over 3}\, p^2 B_{p-3}.\pmod{p^3}.
\end{eqnarray*}
In a similar way, by Lemma \ref{L33} and Corollaries \ref{C24} and \ref{C25} we get
\begin{eqnarray*}
\sum_{0<j<d<p}{(-1)^{d}\over p-d} {2p \choose j}&\equiv&
\sum_{0<j<d<p}\left(-{(-1)^{d}\over d}-p{(-1)^{d}\over d^2}\right)
\left(-2p{(-1)^j\over j}+4p^2{(-1)^j\over j}H(1;j-1)\right)\\
&\equiv& 2p H(-1,-1;p-1)+2p^2 H(-1,-2;p-1)-4p^2 H(1,-1,-1;p-1)\\
&\equiv& 4pq_p(2)^2+{4\over 3}\,p^2B_{p-3} \pmod{p^3}.
\end{eqnarray*}
Thus
$$4^{p-1}\sum_{k=1}^{p-1}{(-1)^k\over k}{-{1\over 2} \choose k}=
2q_p(2)+3pq_p(2)^2+{2\over 3}\,p^2 q_p(2)^3+{7\over 12}\,p^2 B_{p-3} \pmod{p^3}.$$
Since $4^{p-1}=(q_p(2)p+1)^2=1+2q_p(2)p+q_p(2)^2p^2$ then
$$4^{-(p-1)}=(1+2q_p(2)p+q_p(2)^2p^2)^{-1}\equiv 1-2q_p(2)p+3q_p(2)^2p^2 \pmod{p^3}.$$
Finally
\begin{eqnarray*}
\sum_{k=1}^{p-1} {(-1)^k\over k}{-{1\over 2} \choose k}
&\equiv& \left(1-2q_p(2)p+3q_p(2)^2p^2\right)
\left(2q_p(2)+3pq_p(2)^2+{2\over 3}\,p^2 q_p(2)^3+{7\over 12}\,p^2 B_{p-3}\right)\\
&\equiv& 2q_p(2)-pq_p(2)^2+{2\over 3}p^2q_p(2)^3+{7\over 12}p^2 B_{p-3} \pmod{p^3}.
\end{eqnarray*}
Note that by (ii) the r.h.s. is just $-H(1,(p-1)/2)=-\sum_{k=1}^{(p-1)/2}{1\over k}$.
\end{proof}

\end{document}